\documentclass[a4paper,10pt]{amsart}
\usepackage{amsmath,amsthm,amssymb,mathtools}
\usepackage[T1]{fontenc}
\usepackage[utf8]{inputenc}

\newtheorem{theorem}{Theorem}[section]
\newtheorem{proposition}[theorem]{Proposition}

\newtheorem{lemma}[theorem]{Lemma}
\theoremstyle{definition}
\newtheorem{remark}[theorem]{Remark}

\DeclareMathOperator{\sym}{Sym}
\DeclareMathOperator{\alt}{Alt}
\DeclareMathOperator{\Int}{Int}
\DeclareMathOperator{\car}{char}
\newcommand{\id}{\mathrm{id}}

\title[Artin--Springer theorem for generalized quadratic forms]{An elementary proof of the Artin--Springer theorem for generalized quadratic forms over quaternion algebras}
\author{A.-H. Nokhodkar}
\address{Department of Pure Mathematics, Faculty of Mathematical Sciences, 
University of Kashan, Kashan, Iran.}
\email{a.nokhodkar@kashanu.ac.ir}

\begin{document}
\maketitle

\begin{abstract}
We give a short and simple proof that an
anisotropic generalized quadratic form over a quaternion division algebra
with involution of the first kind, in arbitrary characteristic, remains anisotropic after
scalar extension to any odd-degree field extension.
\end{abstract}

%======================================================================
\section{Introduction}

A classical theorem due to Artin and Springer \cite{springer} asserts that
in characteristic different from $2$, an anisotropic quadratic form remains
anisotropic under any odd-degree field extension of the base field (see
\cite[Cor.~18.5]{ekm} for a characteristic-free treatment). From the
viewpoint of algebraic groups, it is natural to ask whether this extends to
hermitian forms with values in a division algebra $D$ equipped with an
involution $\theta$ of the first kind. This question is open in general;
the answer is known only in certain specific cases (see \cite{bqm} for a
survey). In particular, when $F$ has characteristic different from $2$ and
$D$ is a quaternion division algebra, the result holds by a theorem of
Parimala, Sridharan, and Suresh \cite{pss}. The proof uses an excellence
property of the corresponding generic splitting field, namely the function
field of a conic.

In contrast to the situation in characteristic different from $2$,
hermitian forms do not suffice to describe twisted forms of orthogonal
groups. One must instead resort to the finer notion of generalized
quadratic forms, which, via adjunction, gives rise to the so-called
quadratic pairs \cite{et}. From the viewpoint of algebraic groups, it appears more
pertinent to study the behavior of these generalized quadratic forms under
field extensions of odd degree. The argument of \cite{pss} also works for generalized quadratic forms over quaternion algebras in
characteristic~$2$, combining \cite{mt} and \cite{bd} as explained in
\cite{qt}. We
therefore have the following theorem.

\begin{theorem}\label{main}
Let $(Q,\theta)$ be a
quaternion division algebra with involution of the first kind over a field $F$ of arbitrary characteristic
and let $(V,\rho)$ be a generalized quadratic space over $(Q,\theta)$. 
Let $L/F$ be a finite field extension of odd degree. If $\rho_L$ is
isotropic, then $\rho$ is isotropic.
\end{theorem}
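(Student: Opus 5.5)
The plan is to follow Springer's original polynomial argument as closely as possible, working directly with the generalized quadratic form $\rho\colon V\to Q/\alt(Q,\theta)$ and never passing through the generic splitting field.

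\textbf{Reduction to simple extensions.} Since odd degree is multiplicative in towers, it suffices to treat $L=F(\alpha)$ with $[L:F]=n$ odd, using induction on $[L:F]$. If $L/F$ is not simple, I will choose an intermediate field $F\subsetneq M\subsetneq L$; both $[L:M]$ and $[M:F]$ are odd and smaller than $[L:F]$. The one case needing separate care is a purely inseparable extension in characteristic~$2$, but this is vacuous: such extensions have $2$-power degree, so odd degree forces separability there, and separable finite extensions are simple. Throughout I will use that $Q_L$ remains a division algebra, since $n$ is odd and $Q$ has index~$2$. Then $(Q_L,\theta_L)$ and $\rho_L$ make sense, and the module $Q/\alt(Q,\theta)$ is a finite-dimensional $F$-space on which polynomial arithmetic works coefficientwise.

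\textbf{Springer's descent.} Let $p(t)$ be the minimal polynomial of $\alpha$. An isotropic vector of $\rho_L$ lifts to $x(t)\in V[t]$ with $\deg x<n$, $x\neq0$, and $\rho(x(t))\equiv 0 \pmod{p(t)}$. Here $\rho$ is extended polynomially, using the polar form $b_\rho$: the polynomial $\rho(x(t))$ has coefficients in $Q/\alt(Q,\theta)$, and $\rho(\sum x_it^i)=\sum_i \rho(x_i)t^{2i}+\sum_{i<j}b_\rho(x_i,x_j)t^{i+j}$. Dividing $x(t)$ by a suitable scalar polynomial, I may assume the coefficients $x_i$ of $x(t)$ have no common root. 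Arguing by contradiction, I assume $\rho$ is anisotropic.

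\textbf{Degree count.} The first step is to show that $\rho_{F(t)}$ stays anisotropic. The coefficient of $t^{2d}$ in $\rho(x(t))$ is $\rho(x_d)$, where $d=\deg x$, and this is nonzero by anisotropy of $\rho$. Hence $\deg\rho(x(t))=2d$ exactly, and we can write $\rho(x(t))=p(t)g(t)$ with $g\in (Q/\alt(Q,\theta))[t]$ of degree $2d-n$. This degree is odd and at most $n-2$.

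\textbf{Main obstacle.} In the classical case, $g$ has an irreducible factor of odd degree $<n$, giving a smaller odd extension over which $\rho$ is isotropic; induction then concludes. Here $g$ has coefficients in a module, not a field, so it cannot be factored directly. To get around this, I would first enlarge the space: replace $V$ by $V\oplus V$, or rather exploit that an isotropic vector $x$ gives a totally isotropic line $xQ$. This lets me rewrite $\rho(x(t))$ through the $F$-valued polynomials $\mathrm{Nrd}$ or $\mathrm{Trd}(\lambda\cdot -)$ on $xQ$.

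Concretely, I would look for an $F$-linear functional $\ell$ on $Q/\alt(Q,\theta)$ and a quaternion $c\in Q[t]$ of controlled degree such that two things hold. First, the scalar polynomial $\ell(\rho(x(t)c(t)))$ must still have degree exactly $2d+2\deg c$; this is possible because $\rho(x_d)\neq0$ and $Q$ is division, so some $\ell$ and $c$ detect the leading coefficient. Second, the vanishing of $\rho(x(\beta))$ at a root $\beta$ of an odd-degree factor of the scalar cofactor must follow from the vanishing of this scalar polynomial together with its companions for the other functionals.

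If such an $\ell$ and $c$ exist, the scalar cofactor of $p(t)$ has odd degree $<n$, so it has an irreducible factor $h$ of odd degree $<n$. At a root $\beta$ of $h$, the form $\rho_{F(\beta)}$ is isotropic, which contradicts the induction hypothesis. I expect this scalarization, meaning making isotropy of a generalized quadratic form over $Q$ detectable by finitely many $F$-valued quadratic polynomials compatible with Springer's degree count, to be the heart of the matter. It may require treating separately the cases where $\theta$ is symplectic and where it is orthogonal, since these give $\dim_F Q/\alt(Q,\theta)=1$ and $3$ respectively.
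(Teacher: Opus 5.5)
\textbf{There is a genuine gap: the step you call \emph{scalarization} is the whole proof, and it is missing.}

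Your reduction to simple extensions and the Springer degree count are fine. But the mechanism you sketch for the remaining step cannot work as described. After $\rho(x(t))=p(t)g(t)$ with $g\in\overline Q[t]$, you need one $\beta$ of odd degree over $F$ at which \emph{every} $F$-coordinate of $g$ vanishes.

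Consider the only nontrivial case: $\theta$ orthogonal and type $\lambda=1$, so $\dim_F\overline Q=3$. (The one-dimensional case is classical Springer. Type $\lambda=-1$ must be brought to this case by the twist $\rho\mapsto u\rho$ of Section~\ref{sec:p}, which your write-up ignores.) Here isotropy of $\rho$ means a common zero of three $F$-quadratic forms: $\phi=\pi_1\circ\rho$ and the two components $\psi_1,\psi_2$ of $\psi=\pi_2\circ\rho$.

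The coordinate cofactors $g_1,g_2,g_3$ have degree at most $2d-n$. At least one has the odd degree $2d-n$, but the others need not. Nothing forces them to share an odd-degree irreducible factor.

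Choosing a functional $\ell$ and twisting by $c(t)\in Q[t]$ does not change this. We have $\rho(xc)=\theta(c)\rho(x)c$, and isotropy of the scalar form $\ell\circ\rho$ over $F(\beta)$ is strictly weaker than isotropy of $\rho_{F(\beta)}$. Your second requirement on $(\ell,c)$ is therefore a restatement of the theorem, not a step toward it. Springer's cofactor argument only ever controls one quadratic form at a time.

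\textbf{What the paper supplies instead.} There are two ideas you are missing.

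First, the paper never factors a cofactor. It applies Theorem~\ref{ABodd} (Amer--Brumer plus Springer over $F(t)$) to the pair $(\phi+t\psi_1,\psi_2)$ on $V\otimes_F E$, where $E=F(t)$. This turns isotropy of $\rho_L$ into a nonzero $x\in V\otimes_F E$ with $\phi_E(x)+t\psi_E(x)=0$ (Proposition~\ref{prop:reduction}). This is how the three conditions are packed into one pencil, and it disposes of the odd-degree extension without any induction.

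Second, it shows that this single $K(t)$-valued equation already forces $\rho$ to be isotropic. The argument is a descent on $\deg_t x$ that uses the quaternion structure:
\begin{enumerate}
\item Take a minimal-degree solution with leading coefficient $z$. Anisotropy gives $\psi(z)=0$, hence $\rho(z)=\mu\bar\varepsilon_0$ with $\mu\neq0$, and $h_\rho(z,z)=\mu g_0$ is invertible.
\item Writing $x=z\zeta+y$ with $h_{\rho_E}(z,y)=0$ reduces the equation to $\mu G(\zeta)=-\Lambda_t(\rho_E(y))$, where $G(\alpha+\beta j)=N(\alpha)+\eta N(\beta)+\delta t\alpha\iota(\beta)$.
\item The involution $\Phi$ of Lemma~\ref{Phi} preserves $G$, and the top-coefficient condition $1+\delta\iota(\beta_{d-1})=0$ forces $\deg\Phi(\zeta)<d$.
\item So $z\Phi(\zeta)+y$ is a nonzero solution of smaller degree, a contradiction.
\end{enumerate}
Something of this kind --- first merging the components into one equation, then exploiting the norm form on $Q=K\oplus Kj$ --- is what your scalarization step would have to provide.
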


Recently, Qu\'eguiner-Mathieu and Tignol \cite{qt} gave a
characteristic-free proof avoiding the excellence property: realizing the
function field of the conic as a rational function field $K(t)$, they
transport $\rho$ to an ordinary quadratic form via a Morita equivalence on
a split model of $Q$, and run a degree-descent argument on a
crossed-product algebra, in the spirit of Rost's proof \cite{rost} of
excellence for conics.

In this paper, we give a more direct and elementary proof of Theorem~\ref{main}. Unlike
\cite{qt}, we avoid the generic splitting field of $Q$, the associated
Morita equivalence and the Laurent-degree filtration. Instead, a fixed quaternion basis of
$Q$ allows us to decompose $\rho$ directly over $F$ into an $F$-valued
quadratic form $\phi$ and a $K$-valued quadratic form $\psi$, where $K$ is
a separable quadratic extension of $F$. Applying the Amer--Brumer theorem
together with Springer's theorem over $F(t)$, we then reduce
isotropy of $\rho_L$ to a nonzero solution of $\phi(x)+t\psi(x)=0$ over
$F(t)$. The latter equation is handled by an explicit polynomial descent
in $Q[t]$, based on a degree-lowering transformation.

\section{Preliminaries and notations}
\label{sec:p}

Let $A$ be a central simple algebra over a field $F$.
An \emph{involution} on $A$ is an anti-automorphism of $A$ of order two.
An involution $\sigma$ on $A$ is said to be of \emph{the first kind} if it
restricts to the identity on $F$.
Otherwise, it is said to be of the \emph{second kind}.
An involution $\sigma$ of the first kind is \emph{symplectic} if it becomes
adjoint to an alternating bilinear form over a splitting field of $A$, and
\emph{orthogonal} otherwise.

For an algebra with involution $(A,\sigma)$ and $\lambda=\pm1$ we use the
notation
\begin{align*}
\sym_\lambda(A,\sigma)&=\{x\in A\mid \sigma(x)=\lambda x\},\\
\alt_\lambda(A,\sigma)&=\{x-\lambda\sigma(x)\mid x\in A\}.
\end{align*}
Note that if $\car F\neq2$ then $\sym_\lambda(A,\sigma)=\alt_{-\lambda}(A,\sigma)$
(see \cite[p.~14]{knus}).
We will simply denote $\sym_1(A,\sigma)$ by $\sym(A,\sigma)$ and
$\alt_1(A,\sigma)$ by $\alt(A,\sigma)$.

\medskip

Let \(D\) be a division algebra over \(F\) endowed with an involution \(\theta\) of the first kind, and let \(V\) be a finite-dimensional right \(D\)-vector space. For \(\lambda=\pm1\), a \emph{\(\lambda\)-hermitian space} over \((D,\theta)\) is a pair \((V,h)\) in which \(h:V\times V\to D\) is a bi-additive map satisfying
\[
h(ud,vd')=\theta(d)h(u,v)d', \qquad h(v,u)=\lambda\theta(h(u,v))
\]
for every \(u,v\in V\) and \(d,d'\in D\). Consequently, \(h(v,v)\in\sym_\lambda(D,\theta)\) for all \(v\in V\).

Write $\overline D=D/\alt_\lambda(D,\theta)$ and $\bar d=d+\alt_\lambda(D,\theta)$
for $d\in D$.
A map $\rho\colon V\to\overline D$ is called a \emph{generalized quadratic form (of type $\lambda$)} over
$(D,\theta)$ if 
\begin{equation}\label{eq1}
\rho(x+y)-\rho(x)-\rho(y)=\overline{h(x,y)}
\quad(x,y\in V)
\end{equation}
and $\rho(xa)=\theta(a)\rho(x)a$ for all $x\in V$ and $a\in D$, where $h$ is a $\lambda$-hermitian form over $(D,\theta)$.
Note that if $D=F$, then $\rho$ is a quadratic form over $F$.

A generalized quadratic form $\rho$ on $V$ is called  \emph{isotropic} if $\rho(v)=0$ for some
nonzero $v\in V$, and \emph{anisotropic} otherwise.
\medskip

Let $L/F$ be a finite extension such that  $D_L:=D\otimes_F L$ is a division algebra (for example, this holds when $[L:F]$ is coprime to the degree of $D$).  Set $\theta_L:=\theta\otimes\id_L$.
Since $
\alt_\lambda(D_L,\theta_L)=\alt_\lambda(D,\theta)\otimes_F L$,
we have
\[
D_L/\alt_\lambda(D_L,\theta_L)=\overline D\otimes_F L=:\overline D_L.
\]
There exists a unique generalized quadratic form
$
\rho_L:V_L\to\overline D_L
$
satisfying $\rho_L(x\otimes\alpha)=\alpha^2\rho(x)$ for all $x\in V$ and
$\alpha\in L$, whose polar map $h_{\rho_L}$ is the $L$-bilinear extension of $h_\rho$, i.e.,
$h_{\rho_L}(x\otimes\alpha,y\otimes\beta)=\alpha\beta h_\rho(x,y)$ for all $x,y\in V$ and $\alpha,\beta\in L$.

The next theorem holds in arbitrary characteristic and follows from \cite[Thm.~17.14]{ekm} and  \cite[Cor.~18.5]{ekm}.
\begin{theorem}\label{ABodd}
Let $L/F$ be a finite field extension of odd degree, and let $q_1,q_2$ be
quadratic forms on an $F$-vector space $V$.
If $(q_1)_L$ and $(q_2)_L$ have a common nonzero isotropic vector over $L$, then $q_1$ and $q_2$ have one over $F$.
\end{theorem}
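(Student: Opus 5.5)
The plan is to combine the two cited results by passing to rational function fields: first translate the common-zero condition into isotropy of a single quadratic form over a function field, then descend that isotropy along an odd-degree extension.

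\textbf{Step 1 (the form over $F(t)$).} Let $t$ be an indeterminate. Consider the quadratic form
\[
q=q_1+tq_2
\]
on the $F(t)$-vector space $V\otimes_F F(t)$. By the Amer--Brumer theorem \cite[Thm.~17.14]{ekm}, which holds in arbitrary characteristic, two quadratic forms over a field $k$ have a common nontrivial zero over $k$ if and only if their pencil $q_1+tq_2$ is isotropic over $k(t)$.

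\textbf{Step 2 (isotropy over $L(t)$).} Suppose $(q_1)_L$ and $(q_2)_L$ have a common nonzero isotropic vector. Applying the Amer--Brumer theorem with $k=L$ shows that $(q_1)_L+t(q_2)_L$ is isotropic over $L(t)$.

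\textbf{Step 3 (identifying the extension).} Since $L/F$ is finite and algebraic while $t$ is transcendental, the algebra $L\otimes_F F(t)$ is the localization of the domain $L[t]$ at the nonzero elements of $F[t]$. It is therefore a field, namely $L(t)$, and
\[
[L(t):F(t)]=[L:F],
\]
which is odd. Moreover, extending scalars of $q$ from $F(t)$ to $L(t)$ gives exactly $(q_1)_L+t(q_2)_L$. So $q_{L(t)}$ is isotropic.

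\textbf{Step 4 (Springer descent).} By Springer's theorem in its characteristic-free form \cite[Cor.~18.5]{ekm}, applied to the odd-degree extension $L(t)/F(t)$, the form $q$ is already isotropic over $F(t)$. A final application of the Amer--Brumer theorem over $F$ then yields a common nonzero isotropic vector of $q_1$ and $q_2$ over $F$.

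\textbf{Expected obstacle.} The only delicate point is bookkeeping rather than ideas. One must check that the cited statements in \cite{ekm} apply to arbitrary quadratic forms, possibly singular or with nontrivial quasilinear part in characteristic $2$, rather than only to nondegenerate ones. One must also check that the isotropy notion there (a nonzero vector $v$ with $q(v)=0$) matches the one used here. I expect both to hold, since \cite{ekm} treats general quadratic forms. If Springer's corollary were stated only for nondegenerate forms, I would first split off the radical of $q$ and apply the result to each part separately.
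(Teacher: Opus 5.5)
Your proposal is correct and is essentially the paper's argument: the paper simply states that the result follows from the Amer--Brumer theorem \cite[Thm.~17.14]{ekm} and Springer's theorem \cite[Cor.~18.5]{ekm}, and your route through the pencil $q_1+tq_2$ over $F(t)$, using $[L(t):F(t)]=[L:F]$ odd, is the natural way to combine them. Your anticipated obstacle does not arise, since both cited results in \cite{ekm} are formulated for arbitrary, possibly degenerate, quadratic forms. The paper itself relies on this generality when it applies the statement to $\phi_E+t(\psi_1)_E$ and $(\psi_2)_E$.
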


Suppose that $D\neq F$ and $\theta$ is symplectic. 
If $\car F\neq2$, choose a unit $u\in D$ with $\theta(u)=-u$.
If $\car F=2$, let $u\in\sym(D,\theta)\setminus\alt(D,\theta)$ be a unit (see \cite[(2.6)]{knus}).
Define
$
\sigma=\Int(u)\circ\theta,
$
where $\Int(u)$ is the inner automorphism of $D$ induced by $u$.
By \cite[(2.7)]{knus}, the map $\sigma$ is an orthogonal involution on $D$.

Now let $(h,\rho)$ be a generalized quadratic form over $(D,\theta)$. Define
\[
h'(x,y):=u h(x,y),
\qquad
\rho'(x):=u \rho(x).
\]
Then $h'$ is a $\lambda'$-hermitian form over $(D,\sigma)$, where $\lambda'=-\lambda$.
Accordingly, $\rho'(x)=u\rho(x)$ is well-defined as an element of
$D/\alt_{\lambda'}(D,\sigma)$, because
\begin{equation}\label{eq2}
u\cdot \alt_\lambda(D,\theta)=\alt_{\lambda'}(D,\sigma),
\end{equation}
by \cite[(2.7)]{knus}.
It is readily seen that $(h',\rho')$ is a generalized quadratic form over
$(D,\sigma)$, of type $\lambda'$.
Note that \eqref{eq2} implies that $\rho$ is isotropic if and only if $\rho'$ is isotropic.
Moreover, for every field extension $L/F$ such that $D_L$ is a division algebra, the involution
$\sigma_L=\Int(u)\circ\theta_L$ is orthogonal, \eqref{eq2} holds over $L$, and
$(\rho')_L=(\rho_L)'$; hence $\rho_L$ is isotropic if and only if $(\rho')_L$ is isotropic.
Consequently, as far as the Artin--Springer property is concerned, we may
replace $(\theta,\lambda,h,\rho)$ by $(\sigma,\lambda',h',\rho')$, and thus
consider the case where $\theta$ is orthogonal.

A \emph{quaternion algebra} over $F$ is a central simple algebra of dimension
$4$.
Every quaternion algebra $Q$ has a basis
$(1,i,j,k)$ with $i^2,j^2\in F^\times$ and $ij=k=-ji$ if $\operatorname{char}F\ne2$; and 
$i^2+i\in F$, $j^2\in F^\times$ and $ij=k=ji+j$ if $\operatorname{char}F=2$ (see \cite[p. 25]{knus}).
Such a basis is called a \emph{quaternion basis} of $Q$.

Let $(Q,\theta)$ be a quaternion division algebra with orthogonal involution
over $F$ and let
$\rho$ be a generalized quadratic form of type $\lambda$ over $(Q,\theta)$.
If $\car F\neq2$ and $\lambda=-1$, then
$\alt_{-1}(Q,\theta)=\sym(Q,\theta)$ is three-dimensional by
\cite[(2.6)]{knus}, hence $\overline Q$ is one-dimensional. Identifying $\overline Q$ with $F$,
the map $\rho$ becomes a quadratic form on the $F$-vector space $V$, whose scalar extension to $L$
is $\rho_L$; so Theorem \ref{main} reduces to the classical Artin--Springer
theorem for quadratic forms \cite[Cor.~18.5]{ekm}.
Hence we will consider the case $\lambda=1$ in the sequel.

We now fix some notation for the rest of the paper.
Let $(Q,\theta)$ be a quaternion division algebra with orthogonal involution
over $F$.
Let $V$ be a finite-dimensional right vector space over $Q$, and let
$\rho\colon V\to\overline Q$ be a generalized quadratic form over $(Q,\theta)$.
According to \cite[(1.1)]{et}, the hermitian form $h$ in
\eqref{eq1} is uniquely determined by $\rho$ (the argument of \cite{et} also works in characteristic different from $2$).
We call this form \textit{the polar form} of $\rho$ and we denote it $h_\rho$.
Also, for every $x\in V$, we have $h_\rho(x,x)=a+\theta(a)$
for any representative $a\in Q$ of $\rho(x)$.
We also fix a quaternion basis $(1,i,j,k)$ of $Q$ with $\theta(i)=i$ and $\theta(j)=j$.
This is always possible by a simple argument (see \cite[(6.1)]{m} and \cite[(3.3)]{me}).

%======================================================================
\section{The main result}\label{sec:split}

Let $(Q,\theta)$, $(V,\rho)$ and the quaternion basis $(1,i,j,k)$ of $Q$
be as above. We work in one of the following two cases.

\begin{itemize}
\item[(I)] $\operatorname{char}F\neq2$. The elements $i,j\in Q$ satisfy
$i^2,j^2\in F^\times$ and $ij=k=-ji$. Set
$K=F(i)$, $\eta=j^2$, $\varepsilon_0=j$ and $\delta=2\eta$.
Then $\alt(Q,\theta)=Fk$ and $\overline Q=\overline{F+Fi+F\varepsilon_0}$.

\item[(II)] $\operatorname{char}F=2$. The elements $i,j\in Q$ satisfy
$i^2+i\in F$, $j^2\in F^\times$ and $ij=k=ji+j$. Set
$K=F(i)$, $\eta=j^2$, $\varepsilon_0=k$ and $\delta=\eta$.
Then $\alt(Q,\theta)=Fj$ and $\overline Q=\overline{F+Fi+F\varepsilon_0}$.
\end{itemize}
In both cases $\eta\in F^\times$, $\delta\in F^\times$, $\theta(i)=i$ and $\theta(j)=j$. The field $K=F(i)$ is a separable
quadratic extension of $F$.
Let $\iota$ be the nontrivial automorphism of $K/F$ and let
$N(\alpha)=\alpha\iota(\alpha)$ be the norm of $\alpha\in K$ in $K/F$. Then
$\iota(i)=-i$ if $\car F\neq2$ and $\iota(i)=i+1$ if $\car F=2$.
In both cases, we have
\begin{itemize}
\item[(F1)] $Q=K\oplus Kj$, 
$j\alpha=\iota(\alpha)j$ and $\alpha j=j\iota(\alpha)$ for all $\alpha\in K$.

\item[(F2)] For $\xi=\alpha+\beta j$ with $\alpha,\beta\in K$, we have 
$\theta(\xi)=\alpha+\iota(\beta)j$.
\end{itemize}
We also have
\begin{itemize}
\item[(F3)] $\varepsilon_0+\theta(\varepsilon_0)=g_0$ with
$g_0=2j$ in case (I) and $g_0=j$ in case (II).
In particular, $g_0\in Q^\times$.
\end{itemize}
Let $\pi_1\colon\overline Q\to F$ and $\pi_2\colon\overline Q\to K$ be the
$F$-linear maps satisfying
\[\pi_1(\bar1)=\pi_1(\bar i)=0,\ \  \pi_1(\bar\varepsilon_0)=1,\ \
\pi_2(\bar1)=1, \ \ \pi_2(\bar i)=i\ \ {\rm and} \ \ \pi_2(\bar\varepsilon_0)=0.
\]
Then $\pi_1\oplus\pi_2\colon\overline Q\to F\oplus K$ is injective.
Define
\[
\phi=\pi_1\circ\rho\colon V\to F,\qquad \psi=\pi_2\circ\rho\colon V\to K.
\]
Write $\psi=\psi_1+i\psi_2$ with $\psi_1,\psi_2\colon V\to F$. Since $\rho(xs)=s^2\rho(x)$ for $s\in F$ and the polar map of $\rho$ is
$(x,y)\mapsto\overline{h_\rho(x,y)}$, which is $F$-bilinear, the maps $\phi,\psi_1,\psi_2$ are quadratic forms over $F$ on the $F$-vector space $V$.
By the injectivity of $\pi_1\oplus\pi_2$, for all $x\in V$,
\begin{equation}\label{zero}
\rho(x)=0\  {\rm if\ and\ only\ if} \ \phi(x)=0\  {\rm and} \ \psi(x)=0.
\end{equation}

For a field extension \(L/F\), set
$
\overline{Q}_L:=\overline{Q}\otimes_F L$ and $K_L:=K\otimes_F L$.
The scalar extensions
\[
(\pi_1)_L:=\pi_1\otimes\id_L:
\overline{Q}_L\rightarrow L
\quad {\rm and}\quad
(\pi_2)_L:=\pi_2\otimes\id_L:
\overline{Q}_L\rightarrow K_L,
\]
are well-defined. Define
\[
\phi_L:=(\pi_1)_L\circ\rho_L:
V_L\rightarrow L
\quad {\rm and}\quad
\psi_L:=(\pi_2)_L\circ\rho_L:
V_L\longrightarrow K_L.
\]
The map $\phi_L$ is a quadratic form on the $L$-vector space $V_L$ which coincides with $\phi$ on $V$
and whose polar form is the $L$-bilinear extension of that of $\phi$; hence $\phi_L$ is the
scalar extension of the quadratic form $\phi$ to $L$.
Similarly, writing
$
\psi=\psi_1+i\psi_2
$
with $\psi_1,\psi_2:V\to F$, we have
$
\psi_L=(\psi_1)_L+i(\psi_2)_L,
$
where \(i\) denotes \(i\otimes1\in K_L\) and $(\psi_1)_L,(\psi_2)_L$ are the scalar extensions of $\psi_1,\psi_2$ to $L$.
In particular, for field extensions $F\subseteq L\subseteq L'$, the restriction of $\phi_{L'}$
(resp. $\psi_{L'}$) to $V_L\subseteq V_{L'}$ is $\phi_L$ (resp. $\psi_L$).

We now fix $E=F(t)$, where $t$ is an indeterminate.
\begin{proposition}\label{prop:reduction}
Let $L/F$ be a finite extension of odd degree. If $\rho_L$ is isotropic,
then there is a nonzero $x\in V\otimes_F E$ with
$\phi_E(x)+t \psi_E(x)=0$.
\end{proposition}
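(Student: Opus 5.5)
The plan is to reduce the statement to Theorem~\ref{ABodd} applied over the base field $E=F(t)$, using the odd-degree extension $L(t)/E$. The point is that the single $K_E$-valued equation $\phi_E(x)+t\psi_E(x)=0$ is equivalent to a pair of $E$-valued quadratic equations.

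\emph{Step 1: from isotropy of $\rho_L$ to a common zero over $L$.} Suppose $\rho_L(v)=0$ with $0\neq v\in V_L$. The map $(\pi_1)_L\oplus(\pi_2)_L$ is the scalar extension of the injective $F$-linear map $\pi_1\oplus\pi_2$, so it is again injective. Hence the analogue of \eqref{zero} over $L$ gives $\phi_L(v)=0$ and $\psi_L(v)=0$. Since $(1,i)$ is an $L$-basis of $K_L$, the second condition reads $(\psi_1)_L(v)=(\psi_2)_L(v)=0$. Thus $v$ is a common nonzero zero of $\phi_L$, $(\psi_1)_L$ and $(\psi_2)_L$.

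\emph{Step 2: rewriting the target equation.} On $V_E$ consider the two $E$-quadratic forms
\[
q_1:=\phi_E+t(\psi_1)_E,\qquad q_2:=(\psi_2)_E .
\]
Since $K_E=E\oplus Ei$ and $t\in E$, we have
\[
\phi_E(x)+t\psi_E(x)=q_1(x)+t\,i\,q_2(x)\qquad (x\in V_E).
\]
Because $t\neq0$, the equation $\phi_E(x)+t\psi_E(x)=0$ holds if and only if $q_1(x)=q_2(x)=0$. So it suffices to find a common nonzero isotropic vector of $q_1$ and $q_2$ over $E$.

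\emph{Step 3: the odd-degree extension of $E$.} Set $L':=L(t)=L\otimes_F E$. Since $t$ is transcendental over $L$, this is a field, and $[L':E]=[L:F]$ is odd. By the compatibility of scalar extensions recorded before the statement (restriction along $F\subseteq L\subseteq L'$ and $F\subseteq E\subseteq L'$), the forms $(q_1)_{L'}$ and $(q_2)_{L'}$ are $\phi_{L'}+t(\psi_1)_{L'}$ and $(\psi_2)_{L'}$, and these restrict on $V_L\subseteq V_{L'}$ to $\phi_L+t(\psi_1)_L$ and $(\psi_2)_L$. By Step 1, the vector $v\in V_L\subseteq V_{L'}$ is nonzero and is a common zero of $(q_1)_{L'}$ and $(q_2)_{L'}$.

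\emph{Step 4: descent.} Theorem~\ref{ABodd}, applied over the base field $E$ to the forms $q_1,q_2$ on the $E$-vector space $V_E$ and the odd-degree extension $L'/E$, yields a nonzero $x\in V_E$ with $q_1(x)=q_2(x)=0$. By Step 2, this $x$ satisfies $\phi_E(x)+t\psi_E(x)=0$, which proves the proposition.

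The main obstacle I expect is not conceptual but bookkeeping in Step 3. One must check that scalar extension of $\phi,\psi_1,\psi_2$ along $F\to E\to L'$ agrees with scalar extension along $F\to L\to L'$. This holds because each of these forms is determined by its values on $V$ together with its $F$-bilinear polar form, and both routes extend both data in the same way. The remaining steps are direct consequences of \eqref{zero} and Theorem~\ref{ABodd}.
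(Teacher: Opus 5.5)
Your proof is correct and follows the paper's argument essentially step for step: you use the same auxiliary forms $q_1=\phi_E+t(\psi_1)_E$ and $q_2=(\psi_2)_E$, the same odd-degree extension $L(t)/E$, and the same application of Theorem~\ref{ABodd}. One small point: in Step 1 you do not need injectivity of $(\pi_1)_L\oplus(\pi_2)_L$, since $\phi_L(v)=\psi_L(v)=0$ follows directly from applying the linear maps $(\pi_1)_L,(\pi_2)_L$ to $\rho_L(v)=0$.
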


\begin{proof}
Let $0\neq v\in V\otimes_F L$ with $\rho_L(v)=0$. Applying the $L$-linear
maps $(\pi_1)_{L},(\pi_2)_L$ gives
$\phi_L(v)=0$ and $\psi_L(v)=0$.
Since $K_L$ has $L$-basis $1,i$, the equality
$
\psi_L(v)=0$
implies
$(\psi_1)_L(v)=(\psi_2)_L(v)=0$.
Consider the two quadratic forms over $E$ on $V\otimes_F E$ defined by
$q_1=\phi_{E}+t(\psi_1)_{E}$ and $q_2=(\psi_2)_{E}$.
Viewing $v$ as an element of $V\otimes_F L(t)$, we have
$(q_1)_{L(t)}(v)=(q_2)_{L(t)}(v)=0$.
Since
$
[L(t):E]=[L:F]
$
is odd, Theorem~\ref{ABodd} yields a nonzero
$x\in V\otimes_F E$ such that
$
q_1(x)=q_2(x)=0.
$
Therefore
$
\phi_E(x)+t\psi_E(x)
=q_1(x)+t i q_2(x)=0
$
in $K(t)$.
\end{proof}
Define
\[
\Lambda_t:=(\pi_1)_E+t (\pi_2)_E\colon\overline Q_E\to K_E,
\quad\text{so that}\quad
\Lambda_t(\rho_E(x))=\phi_E(x)+t \psi_E(x).
\]
For $\xi=\alpha+\beta j\in Q[t]\subseteq Q_E$ with $\alpha,\beta\in K[t]$, put
\[
G(\xi):=\Lambda_t\bigl(\overline{\theta(\xi) \varepsilon_0 \xi}\bigr)\in K[t].
\]

\begin{lemma}\label{G}
In both cases (I) and (II),
$
G(\alpha+\beta j)=N(\alpha)+\eta N(\beta)+\delta t \alpha \iota(\beta).
$
\end{lemma}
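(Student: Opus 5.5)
The argument is a direct computation. The first step is to expand $\theta(\xi)\varepsilon_0\xi$ in the $K$-basis $(1,j)$ of $Q$ using (F1) and (F2). The second step is to reduce modulo $\alt(Q,\theta)$, which is $Fk$ in case (I) and $Fj$ in case (II), and read off the coordinates with respect to $\bar1,\bar i,\bar\varepsilon_0$. The last step is to apply $\Lambda_t=(\pi_1)_E+t(\pi_2)_E$. Everything is $K[t]$-linear or semilinear in the right way, so we may compute over $K$ with $t$ as a passive scalar, since $\theta$ and $\iota$ fix $t$.

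First, write $\varepsilon_0$ in the form $a+bj$ with $a,b\in K$. In case (I), $\varepsilon_0=j$, so $a=0$ and $b=1$. In case (II), $\varepsilon_0=k=ij$, so $a=0$ and $b=i$. By (F2), $\theta(\xi)=\alpha+\iota(\beta)j$. The product
\[
(\alpha+\iota(\beta)j)\,bj\,(\alpha+\beta j)
\]
is then expanded by moving every $j$ to the right with $j\gamma=\iota(\gamma)j$ and using $j^2=\eta$. In case (I) this gives
\[
j\alpha=\iota(\alpha)j,\qquad \iota(\beta)j\cdot j\alpha=\eta\,\iota(\beta)\alpha,\qquad \alpha j\beta j=\alpha\iota(\beta)\eta,\qquad \iota(\beta)j j\beta j=\eta\,\iota(\beta)\beta j.
\]
So the element is
\[
\eta\bigl(\alpha\iota(\beta)+\iota(\alpha\iota(\beta))\bigr)+\bigl(\alpha\iota(\alpha)+\eta\beta\iota(\beta)\bigr)j .
\]
The $K$-part is $2\eta\,\mathrm{Tr}$-like and lies in $F$ plus possibly $Fi$; the $j$-part is $N(\alpha)j+\eta N(\beta)j$. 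Case (II) is handled the same way, with $b=i$ inserted.

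The second step needs the $F$-basis $1,i,j,k$ together with the quotient by $\alt$. In case (I), $\alt=Fk$, and $\beta' j$ with $\beta'=x+yi$ equals $xj+yk\equiv x\varepsilon_0$. So a general element $c_0+c_1 i+(x+yi)j$ maps to $\pi_1=x$ and $\pi_2=c_0+c_1 i$. Applying $\Lambda_t$ gives $x+t(c_0+c_1i)$. Comparing this with the target $N(\alpha)+\eta N(\beta)+\delta t\alpha\iota(\beta)$ shows how the factors must match: the $F$-valued norms come from $\pi_1$, the $t$-term comes from $\pi_2$, and $\delta=2\eta$ arises because the $K$-component is $\eta(\alpha\iota(\beta)+\cdots)$. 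The target $\delta t\alpha\iota(\beta)$ is not symmetric under $\iota$, though, so $\pi_2$ must be taken of the $K$-component directly, not of its trace. I therefore expect to recompute the $K$-component carefully. For instance, the term $\alpha j\beta j$ gives $\alpha\iota(\beta)\eta$, while $\iota(\beta)j j\alpha=\eta\iota(\beta)\alpha$. Since $K$ is commutative, these sum to $2\eta\alpha\iota(\beta)$, which matches $\delta=2\eta$.

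Case (II) repeats the computation with $\varepsilon_0=ij$, $\alt=Fj$ and $k\equiv\varepsilon_0$. Here $\beta'j=(x+yi)j=xj+yk\equiv y\varepsilon_0$, and the analogous collection gives the coefficient $\delta=\eta$. The main obstacle is bookkeeping: correctly identifying the $\varepsilon_0$-coordinate modulo $\alt$ in each characteristic, and checking that the $K$-component is exactly $\delta\alpha\iota(\beta)$ rather than its conjugate. This is where I would be most careful with the commutation rule $\alpha j=j\iota(\alpha)$ and with the definition of $\pi_2$ on $\bar i$.
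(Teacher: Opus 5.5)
Your route is the paper's: expand $\theta(\xi)\varepsilon_0\xi$ in $K[t]\oplus K[t]j$ via (F1)--(F2), reduce modulo $\alt$, read off the $\bar\varepsilon_0$-coordinate and the $K$-part, and apply $\Lambda_t$. Your reading-off rules are also the paper's: $\pi_1=x$ in case (I) and $\pi_1=y$ in case (II), for a $j$-component $x+yi$. Case (I) is correct once you discard your first displayed expression. The two cross terms are $\eta\alpha\iota(\beta)$ and $\eta\iota(\beta)\alpha$, not $\eta\alpha\iota(\beta)$ and $\eta\iota(\alpha)\beta$. So the $K$-part is $2\eta\alpha\iota(\beta)$, and the $j$-coefficient $N(\alpha)+\eta N(\beta)$ lies in $F[t]$, as the argument requires.

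What is missing is case (II), which you only assert is ``analogous''. It is not a literal repetition: copying case (I) would give the $K$-part $2\eta\alpha\iota(\beta)=0$ in characteristic $2$. What actually happens is that $\varepsilon_0\xi=i\iota(\alpha)j+\eta i\iota(\beta)$ and
\[
\theta(\xi)\varepsilon_0\xi=\eta\bigl(i+\iota(i)\bigr)\alpha\iota(\beta)+\bigl[iN(\alpha)+\eta\iota(i)N(\beta)\bigr]j .
\]
The two cross terms now differ, because passing $j$ over $i$ turns it into $\iota(i)$.
\begin{itemize}
\item The identity $i+\iota(i)=1$ gives the $K$-part $\eta\alpha\iota(\beta)$, i.e.\ $\delta=\eta$.
\item Using $\iota(i)=1+i$, the bracket becomes $\eta N(\beta)+\bigl(N(\alpha)+\eta N(\beta)\bigr)i$. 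Its $i$-coordinate $N(\alpha)+\eta N(\beta)$ is the coefficient of $k=\varepsilon_0$, which survives modulo $\alt=F[t]j$. The term $\eta N(\beta)j$ is killed.
\end{itemize}
Adding this computation completes your argument, and it then coincides with the paper's proof.
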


\begin{proof}
All computations take place in $Q[t]=K[t]\oplus K[t]j$ and use the fact that $t$ is central and fixed by $\iota$ and $\theta$.
We drop the subscript $E$ from $\pi_1$ and $\pi_2$ for simplicity.
Set $\alt=\alt(Q[t],\theta)$. The classes in
$\overline Q[t]$ in cases (I) and (II) are read off as follows, for
$P=P_0+P_1 i\in K[t]$,
$R=R_0+R_1 i\in K[t]$ and $S=P+Rj\in Q[t]$ ($P_0,P_1,R_0,R_1\in F[t]$):
\begin{itemize}
\item[(I)] $\overline{P+R j}=\bar P+R_0\bar j$
(as $R_1 i j=R_1 k\in\alt$), so $\pi_1(\bar S)=R_0$, $\pi_2(\bar S)=P$;
\item[(II)] $\overline{P+R j}=\bar P+R_1\bar k$
(as $R_1 i j=R_1 k$ and $R_0 j\in\alt$), so $\pi_1(\bar S)=R_1$, $\pi_2(\bar S)=P$.
\end{itemize}
\emph{Case (I).} Here $\varepsilon_0=j$. By (F1) and (F2), we have $\varepsilon_0\xi=\iota(\alpha)j+\eta\iota(\beta)$,
hence
\[
\theta(\xi)\varepsilon_0\xi
=(\alpha+\iota(\beta)j)\bigl(\iota(\alpha)j+\eta\iota(\beta)\bigr)
=2\eta \alpha\iota(\beta)+\bigl(N(\alpha)+\eta N(\beta)\bigr)j,
\]
so $\pi_1(\overline{\theta(\xi)\varepsilon_0\xi})=N(\alpha)+\eta N(\beta)$ and
$\pi_2(\overline{\theta(\xi)\varepsilon_0\xi})=2\eta \alpha\iota(\beta)$.
\\
\emph{Case (II).} Here $\varepsilon_0=k=ij$, so
$
\varepsilon_0\xi=i\iota(\alpha)j+\eta i\iota(\beta).
$
Hence, using (F1),
\[
\theta(\xi)\varepsilon_0\xi
=(\alpha+\iota(\beta)j)\bigl(i\iota(\alpha)j+\eta i\iota(\beta)\bigr)
=\eta\bigl(i+\iota(i)\bigr)\alpha\iota(\beta)
+\bigl[iN(\alpha)+\eta\iota(i)N(\beta)\bigr]j .
\]
Since $\iota(i)=1+i$, we have $i+\iota(i)=1$, and the bracket equals
$
\eta N(\beta)+\bigl(N(\alpha)+\eta N(\beta)\bigr)i,
$
hence
\[\theta(\xi)\varepsilon_0\xi
=\eta \alpha\iota(\beta)+\eta N(\beta)j+\bigl(N(\alpha)+\eta N(\beta)\bigr)k.\]
Thus $\pi_1(\overline{\theta(\xi)\varepsilon_0\xi})=N(\alpha)+\eta N(\beta)$ and
$\pi_2(\overline{\theta(\xi)\varepsilon_0\xi})=\eta \alpha\iota(\beta)$.

In each case $G(\xi)=\pi_1+t\pi_2$ is the stated expression, with
$\delta=2\eta$ in case (I) and $\delta=\eta$ in case (II).
\end{proof}

\begin{lemma}\label{Phi}
Define $\Phi\colon Q[t]\to Q[t]$ by
\[
\Phi(\alpha+\beta j)=\bigl(\iota(\alpha)+\delta t \iota(\beta)\bigr)-\iota(\beta) j
\qquad(\alpha,\beta\in K[t]).
\]
Then $G(\Phi(\xi))=G(\xi)$ for all $\xi\in Q[t]$, and
$\Phi(\xi)\ne0$ if $\xi\ne0$.
\end{lemma}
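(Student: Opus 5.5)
The plan is to verify both claims directly from the closed formula of Lemma~\ref{G}. Write $\xi=\alpha+\beta j$ and set $\alpha'=\iota(\alpha)+\delta t\iota(\beta)$ and $\beta'=-\iota(\beta)$. Then $\Phi(\xi)=\alpha'+\beta'j$, and by Lemma~\ref{G}
\[
G(\Phi(\xi))=N(\alpha')+\eta N(\beta')+\delta t\,\alpha'\iota(\beta').
\]
Because $t$ is fixed by $\iota$, $\iota$ extends to an automorphism of $K[t]$ of order two. Hence $N$ is multiplicative and $\iota$-invariant on $K[t]$, and $N(\beta')=N(\beta)$.

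For the invariance, I will expand each term. The first gives
\[
N(\alpha')=(\iota(\alpha)+\delta t\iota(\beta))(\alpha+\delta t\beta)=N(\alpha)+\delta t\,\iota(\alpha)\beta+\delta t\,\alpha\iota(\beta)+\delta^2t^2N(\beta).
\]
Since $\iota(\beta')=-\beta$, the last term gives
\[
\delta t\,\alpha'\iota(\beta')=-\delta t\,\iota(\alpha)\beta-\delta^2t^2N(\beta).
\]
Adding these, the $\delta t\,\iota(\alpha)\beta$ terms cancel, and the $\delta^2t^2N(\beta)$ terms cancel. What remains is $N(\alpha)+\eta N(\beta)+\delta t\,\alpha\iota(\beta)=G(\xi)$. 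This works uniformly in cases (I) and (II), since only the formula of Lemma~\ref{G} is used; in characteristic $2$ the minus signs are harmless.

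For nonvanishing, suppose $\Phi(\xi)=0$. By (F1) the sum $Q[t]=K[t]\oplus K[t]j$ is direct, so both components vanish. The second component gives $\iota(\beta)=0$, hence $\beta=0$. Then the first component gives $\iota(\alpha)=0$, hence $\alpha=0$, so $\xi=0$. Alternatively, $\Phi$ is $F[t]$-linear with explicit inverse $\alpha+\beta j\mapsto\iota(\alpha)+\delta t\beta-\iota(\beta)j$, but injectivity is all that is needed.

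The only step requiring care is the sign and conjugation bookkeeping in the expansion of $N(\alpha')$, namely checking that the cross term $\delta t\,\iota(\alpha)\beta$ cancels exactly. This relies on $\iota$ fixing $t$ and $\delta$, which holds since $\delta\in F^\times$.
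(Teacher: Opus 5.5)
Your argument is correct and is essentially the paper's own proof: you expand $N(\alpha')$ and $\delta t\,\alpha'\iota(\beta')$ via Lemma~\ref{G}, the cross terms cancel, and you argue componentwise in $K[t]\oplus K[t]j$ to get $\Phi(\xi)\neq0$. One slip in your optional aside, which the proof does not rely on: your map $\alpha+\beta j\mapsto\iota(\alpha)+\delta t\beta-\iota(\beta)j$ sends $\Phi(\alpha+\beta j)$ to $\alpha+\delta t(\beta-\iota(\beta))+\beta j$, so it is not an inverse of $\Phi$; the actual inverse is $\Phi$ itself, since a direct check gives $\Phi^2=\mathrm{id}$.
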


\begin{proof}
Let $\xi=\alpha+\beta j\in Q[t]$. Put
$\alpha'=\iota(\alpha)+\delta t\iota(\beta)$ and $\beta'=-\iota(\beta)$.
Then $\Phi(\xi)=\alpha'+\beta'j$. 
By Lemma~\ref{G},
\[
G(\Phi(\xi))=N(\alpha')+\eta N(\beta')+\delta t \alpha'\iota(\beta').
\]
We have $N(\beta')=N(\beta)$ and $\delta t \alpha'\iota(\beta')
=-\delta t \iota(\alpha)\beta-\delta^2t^2N(\beta)$.
Also, using $\iota(\alpha')=\alpha+\delta t\beta$ we obtain
\begin{align*}
N(\alpha')
=N(\alpha)+\delta t \iota(\alpha)\beta
+\delta t \alpha\iota(\beta)+\delta^2t^2N(\beta).
\end{align*}
Adding these gives
\[
N(\alpha')+\eta N(\beta')+\delta t \alpha'\iota(\beta')
=N(\alpha)+\delta t \alpha\iota(\beta)+\eta N(\beta).
\]
Hence $G(\Phi(\xi))=G(\xi)$.

Finally, if $\Phi(\xi)=0$, then $\alpha'=0$ and $\beta'=0$.
From $\beta'=-\iota(\beta)=0$ we get $\beta=0$, and then
$\alpha'=\iota(\alpha)=0$ gives $\alpha=0$. Thus $\xi=0$.
\end{proof}
\begin{remark}
One checks directly that \(\Phi^2=\mathrm{id}\); this confirms the fact that \(\Phi(\xi)\ne0\) whenever \(\xi\ne0\).
\end{remark}

\begin{theorem}\label{Q2}
If $\rho$ is anisotropic, then there is no nonzero
$x\in V\otimes_F E$ with $\phi_E(x)+t\psi_E(x)=0$.
\end{theorem}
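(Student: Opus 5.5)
We argue by contradiction, via a degree descent in the spirit of Springer's theorem over $F(t)$. Suppose $\rho$ is anisotropic but some nonzero $x\in V\otimes_F E$ satisfies $\phi_E(x)+t\psi_E(x)=0$. Clearing denominators, we may assume $x\in V\otimes_F F[t]=V[t]$. Among all such solutions we choose one of minimal degree $d$, and write $x=\sum_{m=0}^d x_mt^m$ with $x_m\in V$ and $x_d\ne0$. Set $q=\phi_E+t\psi_E$, a quadratic form over $E$ with values in $K_E$.

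\textbf{Base case $d=0$.} Here $x=x_0\in V$, and $\phi(x_0)+t\psi(x_0)=0$ in $K(t)$. Comparing coefficients gives $\phi(x_0)=0$ and $\psi(x_0)=0$. By \eqref{zero} this means $\rho(x_0)=0$, which contradicts anisotropy.

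\textbf{Case $d\ge1$: extracting the leading data.} Expand $q(x)$ in powers of $t$. The coefficient of $t^{2d+1}$ is $\psi(x_d)$, so $\psi(x_d)=0$. Since $\rho$ is anisotropic, \eqref{zero} then forces $c:=\phi(x_d)\ne0$, so $\rho(x_d)=c\,\bar\varepsilon_0$. By (F3), $h_\rho(x_d,x_d)=c\,g_0$, which is invertible. Hence the restriction of $h_\rho$ to the line $x_dQ$ is nondegenerate, and we get an orthogonal decomposition $V=x_dQ\perp W$. Extending scalars, write $x=x_d\xi+w$ with $\xi\in Q[t]$ and $w\in W[t]$. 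Orthogonality makes the cross term vanish, so $\rho_E(x)=\overline{\theta(\xi)c\varepsilon_0\xi}+\rho_E(w)$. Applying $\Lambda_t$ and Lemma~\ref{G} gives
\[
q(x)=c\,G(\xi)+q(w),\qquad G(\alpha+\beta j)=N(\alpha)+\eta N(\beta)+\delta t\alpha\iota(\beta).
\]
The isotropy equation now lives on the product of a concrete two-dimensional $K$-piece $G$ and the form $q|_W$.

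\textbf{Degree-lowering step.} By Lemma~\ref{Phi}, $x':=x_d\Phi(\xi)+w$ is again a nonzero solution, because $G(\Phi(\xi))=G(\xi)$. The task is to combine $\Phi$ with a suitable normalization of $\xi$ so that the new solution has degree less than $d$, contradicting minimality.

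First I would show that $w$ has degree at most $d-1$; this should follow from $x_d$ being the leading coefficient of $x$ together with the decomposition $V=x_dQ\perp W$. Second, write $\xi=\alpha+\beta j$ and read off leading coefficients in the identity $cG(\xi)=-q(w)$. Since $q(w)$ has degree at most $2d-1$, the top-degree terms of $N(\alpha)+\eta N(\beta)+\delta t\alpha\iota(\beta)$ must cancel. This should force a relation such as $\deg\alpha=\deg\beta+1$, with leading coefficient of $\alpha$ equal to $-\delta t$ times that of $\beta$, up to $\iota$. Note that $N(\alpha)+\eta N(\beta)$ alone is anisotropic over $K$, because $Q$ is a division algebra, so the $t$-term must absorb the top degree. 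Given such a relation, $\Phi$ replaces $\alpha$ by $\iota(\alpha)+\delta t\iota(\beta)$, whose leading term cancels and whose degree drops. Third, if necessary, I would first translate $\xi$ by a constant multiple (or right-multiply $x$ by a unit of $Q$) to put the leading coefficients into this normal form.

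I expect this last step to be the main obstacle. It requires precise bookkeeping of the degrees of $\alpha$, $\beta$ and the cross term, to guarantee that $\max(\deg\alpha',\deg\beta')<d$ and hence that $\deg x'<d$. If $\Phi$ alone does not strictly lower the degree in every case, I would try composing it with a reflection in $x_d$, or dividing out a common power of $t$ (note that $t\mid x'$ would also lower the degree). Either outcome contradicts the minimality of $d$ and finishes the proof.
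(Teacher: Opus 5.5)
Your setup is the paper's own. Your $\xi$ and $w$ are exactly the paper's $\zeta=g^{-1}h_{\rho_E}(x_d,x)$ and $y=x-x_d\zeta$, and the descent uses the same map $\Phi$. However, the decisive step, that $x_d\Phi(\xi)+w$ has degree $<d$, is not proved. You only anticipate a leading-term relation and hold several fallbacks in reserve (constant normalization, units, reflections, dividing by $t$), so as written the argument is incomplete.

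The missing observation is that the decomposition $V=x_dQ\perp W$ acts coefficientwise: $x_m=x_d\xi_m+w_m$ with $\xi_m=g^{-1}h_\rho(x_d,x_m)$ and $g=h_\rho(x_d,x_d)$. For $m=d$ this gives $\xi_d=1$ and $w_d=0$. Hence $\deg w\le d-1$, and writing $\xi=\alpha+\beta j$ you get $\alpha=t^d+(\text{lower terms})$ and $\deg\beta\le d-1$, with no normalization needed.

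Since $q(w)$ has degree $\le 2d-1$, the $t^{2d}$-coefficient of $G(\xi)=N(\alpha)+\eta N(\beta)+\delta t\alpha\iota(\beta)$ must vanish. Here $N(\alpha)$ contributes $1$, $\eta N(\beta)$ contributes nothing, and $\delta t\alpha\iota(\beta)$ contributes $\delta\iota(\beta_{d-1})$. So $1+\delta\iota(\beta_{d-1})=0$. This is precisely the $t^d$-coefficient of $\alpha'=\iota(\alpha)+\delta t\iota(\beta)$, so $\deg\alpha'\le d-1$, while $\deg\beta'=\deg\beta\le d-1$.

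Thus $x'=x_d\Phi(\xi)+w$ has degree $\le d-1$. It is a solution by Lemma~\ref{Phi} and orthogonality. It is nonzero because $\xi\ne0$ (as $\xi_d=1$), hence $\Phi(\xi)\ne0$, and the sum $x_dQ[t]\oplus W[t]$ is direct. A single application of $\Phi$ suffices; no case analysis, reflection or division by $t$ is ever required.

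Separately, drop the remark that $N(\alpha)+\eta N(\beta)$ is anisotropic because $Q$ is a division algebra. In characteristic $\ne2$ the reduced norm of $\alpha+\beta j$ is $N(\alpha)-\eta N(\beta)$. The form $N(\alpha)+\eta N(\beta)$ is instead the norm form of the quaternion algebra $(i^2,-\eta)_F$, which can be split. For example, take Hamilton's quaternions over the reals with $i^2=j^2=-1$; then $\alpha=\beta=1$ gives $1-1=0$. The claim holds only in characteristic $2$. Fortunately it plays no role: the top-degree comparison above uses only $\deg\beta<\deg\alpha=d$.
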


\begin{proof}
Suppose $\rho$ is anisotropic and, by contradiction, that there is a nonzero
solution. Since
\[
\phi_E(xs)+t\psi_E(xs)=s^2\bigl(\phi_E(x)+t\psi_E(x)\bigr)
\]
for $s\in E$, we may clear denominators and assume $x\in V[t]$.
Choose a nonzero solution $x=\sum_{m=0}^d x_m t^m\in V[t]$ with $x_0,\dots,x_d\in V$ such that $d$ is minimal,
and put $z=x_d\ne0$.
Recall $\Lambda_t(\rho_E(x))=\phi_E(x)+t\psi_E(x)$.

Since $\rho_E(x)\in\overline Q[t]$ has degree $\le2d$ with
$t^{2d}$-coefficient $\rho(z)$, the
$t^{2d+1}$-coefficient of $\Lambda_t(\rho_E(x))$ is $\psi(z)$; hence
$\psi(z)=0$, i.e., $\pi_2(\rho(z))=0$.
If $\phi(z)=0$, then $\rho(z)=0$ by \eqref{zero}, contradicting
anisotropy. Hence,
$\mu:=\phi(z)\in F^\times$, i.e., $\pi_1(\rho(z))=\mu$.
The injectivity of $\pi_1\oplus\pi_2$ implies that $\rho(z)=\mu\bar\varepsilon_0$. If $d=0$, then
$\Lambda_t(\rho_E(x))=\Lambda_t(\rho(z))=\mu\ne0$, a contradiction; so $d\ge1$.

Set $g=h_\rho(z,z)$.
By \cite[(1.1)]{et}, we have $g=\mu (\varepsilon_0+\theta(\varepsilon_0))$, hence
(F3) implies that
$g=\mu g_0\in Q^\times$
is an invertible element, independent of $t$.
Put
\[
\zeta:=g^{-1}h_{\rho_E}(z,x)\in Q[t],\qquad y:=x-z \zeta .
\]
The $t^d$-coefficient of $h_{\rho_E}(z,x)$ is $g$.
Write $\zeta=\alpha+\beta j$ for some $\alpha,\beta\in K[t]$.
Since the $t^d$-coefficient of $\zeta$ is $1$
and $\zeta$ has degree $d$, the $t^d$-coefficient of $\alpha$ is $1$ and $\deg\beta\le d-1$.
Moreover, $h_{\rho_E}(z,y)=h_{\rho_E}(z,x)-g\zeta=0$, and $\deg y\le d-1$ because the
$t^d$-coefficients of $x$ and $z\zeta$ are both $z$.
Since $h_{\rho_E}(z\zeta,y)=\theta(\zeta)h_{\rho_E}(z,y)=0$, we obtain
\[
\rho_E(x)=\rho_E(z\zeta)+\rho_E(y)
=\theta(\zeta)\rho_E(z)\zeta+\rho_E(y)
=\mu \overline{\theta(\zeta)\varepsilon_0\zeta}+\rho_E(y).
\]
Applying $\Lambda_t$ gives
\begin{equation}\label{AGm}
\mu G(\zeta)=-\Lambda_t(\rho_E(y)).
\end{equation}
As $\deg\rho_E(y)\le2d-2$, the right-hand side has degree $\le2d-1$; hence
$\deg G(\zeta)\le2d-1$. By Lemma~\ref{G},
\[
G(\zeta)=N(\alpha)+\eta N(\beta)+\delta t\alpha\iota(\beta).
\]
Here $N(\alpha)$ has degree $2d$ with $t^{2d}$-coefficient $1$;
$\eta N(\beta)$ has degree $\le2d-2$; and $\alpha\iota(\beta)$ has degree
$\le2d-1$, with $t^{2d-1}$-coefficient $\iota(\beta_{d-1})$, where
$\beta_{d-1}$ is the $t^{d-1}$-coefficient of $\beta$.
So the $t^{2d}$-coefficient of $G(\zeta)$ is $1+\delta \iota(\beta_{d-1})$.
Therefore
\begin{equation}\label{top}
1+\delta \iota(\beta_{d-1})=0.
\end{equation}
Let $\zeta'=\Phi(\zeta)=\alpha'+\beta'j$, where $\alpha'=\iota(\alpha)+\delta t\iota(\beta)$ and $\beta'=-\iota(\beta)$.
The $t^d$-coefficient of $\alpha'$ is
$1+\delta\iota(\beta_{d-1})=0$ by \eqref{top}.
Hence, $\deg\alpha'\le d-1$ and $\deg\beta'\le d-1$, which implies that $\deg\zeta'\le d-1$.
Put $x':=z \zeta'+y$.
Then $\deg x'\le d-1$, and since $h_{\rho_E}(z,y)=0$, we obtain
$\rho_E(x')=\theta(\zeta')\rho_E(z)\zeta'+\rho_E(y)$, hence
\[
\phi_E(x')+t\psi_E(x')=\Lambda_t(\rho_E(x'))=\mu G(\zeta')+\Lambda_t(\rho_E(y))
=\mu G(\zeta)+\Lambda_t(\rho_E(y))=0,
\]
by Lemma~\ref{Phi} and \eqref{AGm}.
Finally $h_{\rho_E}(z,x')=g \zeta'\ne0$, because $g$ is invertible and
$\zeta'\ne0$ (Lemma~\ref{Phi}, as $\zeta\neq0$); so $x'\ne0$.
Thus $x'$ is a nonzero solution of degree $<d$, contradicting the
minimality of $d$.
\end{proof}

\begin{remark}
Since \(\Phi^2=\mathrm{id}\), the transformation \(\zeta\mapsto\zeta'=\Phi(\zeta)\) used above pairs up the solutions of \(G=c\) in an involutive way -- a Vieta-jump phenomenon in the spirit of Rost's degree-descent proof for conics \cite{rost}.
\end{remark}
We are now ready to prove the main result.
\begin{proof}[Proof of Theorem~\ref{main}]
By the reduction in
Section~\ref{sec:p}, we may assume that $\theta$ is orthogonal and $\lambda=1$, so we are
in one of the cases (I)--(II). Assume that $\rho_L$ is isotropic. By
Proposition~\ref{prop:reduction} there is a nonzero
$x\in V\otimes_F E$ with
$
\phi_E(x)+t\psi_E(x)=0.
$
By Theorem~\ref{Q2}, $\rho$ is isotropic.
\end{proof}

\end{document}